\newcommand{\F}{\mathcal{F}}
\newcommand{\J}{\mathcal{J}}
\newcommand{\HT}{\mathcal{H}^\Phi}
\newcommand{\R}{\mathbb{R}}
\newcommand{\BR}{\bar{\mathbb{R}}}
\newcommand{\C}{\mathcal{C}^\Phi}
\newcommand{\x}{r}
\newcommand{\y}{s}
\newcommand{\Phis}{\Phi_{\mathrm{sy}}}
\newcommand{\inner}[2]{\langle{#1},{#2}\rangle}
\newcommand{\tos}{\rightrightarrows}
\newtheorem{theorem}{Theorem}[section]
\newtheorem{lemma}[theorem]{Lemma}
\newtheorem{corollary}[theorem]{Corollary}
\newtheorem{proposition}[theorem]{Proposition}
\newtheorem{definition}[theorem]{Definition}
\title{Fixed Points of Generalized Conjugations}
\author{ M. Marques Alves\thanks{IMPA, Est. D. Castorina 110, 22460-320
    Rio de Janeiro, Brazil 
   ({\tt maicon@impa.br})}\hspace{.5em}\thanks{Partially supported by Brazilian CNPq
scholarship.} 
  \and
  B. F. Svaiter\thanks{ IMPA, Est. D. Castorina 110, 22460-320 Rio de
    Janeiro, Brazil ({\tt benar@impa.br}) }\hspace{.5em}
    \thanks{Partially supported by CNPq
    grants 300755/2005-8, 475647/2006-8 and by PRONEX-Optimization}
}
\date{}
\begin{document}

\maketitle

\begin{abstract}
  Conjugation, or Legendre transformation, is a basic tool in convex
  analysis, rational mechanics, economics and optimization. It maps a function
  on a linear topological space into another one, defined in the
  dual of the linear space by coupling
  these space by meas of the duality product.

  \emph{Generalized conjugation} extends classical conjugation to any pair
  of domains, using an arbitrary coupling function between these
  spaces.
  This generalization of conjugation is now being widely used in
  optima transportation problems, variational analysis and also optimization.

  If the coupled spaces are equal, generalized conjugations 
  define order reversing maps of a   family of functions into itself.
  In this case, is natural to ask for the existence of  fixed
  points of the conjugation, that is, functions which are equal to
  their (generalized) conjugateds.
  Here we prove that any  generalized \emph{symmetric}  conjugation
  has fixed points.
  The basic tool of the proof is a variational principle involving the
  order reversing feature of the conjugation.

  As an application of this abstract result, we will extend to real
  linear topological spaces a fixed-point theorem for Fitzpatrick's
  functions, previously proved in Banach spaces.
  \\
  2000 Mathematics Subject Classification:
    49J40 (primary), 
    49J52 (secondary).
     \\
  \\
  Key words: Generalized conjugation, fixed points.
  \\
\end{abstract}

\pagestyle{plain}

\section{Introduction}

Fenchel-Legendre conjugation is a basic tool in convex analysis,
classical mechanics and  optimization \cite{RO-Conv,ARN}.
An extension of this conjugation,  proposed by Moreau \cite{moreau1,moreau2}
and known as Generalized Conjugation is
now being used in variational analysis and optimal transportation
\cite{RO-WE,TRU, RuschOpt95, Villani-old,Villani}.
In this work, using a variational principle, we shall prove existence
of fixed points of any generalized (symmetric) conjugation.
This result will be used to extend a fixed-point theorem in the family
of Fitzpatrick's functions, previously proved in a Banach space
setting \cite{SV-Fix03}.

We use the notation $\BR$ for the extended real numbers: 
\[
\BR=\R\cup\{-\infty, \infty\}.
\]
The family of extended real valued functions on a set $E$ will be denoted
by $\BR^E$.
Let $E$ and $F$ be non-empty sets. 
A coupling function 
\begin{equation}
  \label{eq:def.cf}
  \Phi: E \times F \rightarrow \R
\end{equation}
induces two conjugations, $\C _1$ and $\C_2$, defined as follows
\begin{equation}
  \label{eq:def.c1}
  \begin{array}{ll}
\displaystyle     \C_1:\BR^E\to \BR^F,\qquad &\displaystyle 
  \C _1\, h\,(\y)= \sup_{\x \in E} \lbrace \Phi(\x, \y) - h(\x)
  \rbrace\\[1.5em]
\displaystyle   \C_2:\BR^F\to \BR^E,\qquad &\displaystyle 
  \C _2 f(\x)= \sup_{\y \in E} \lbrace \Phi(\x, \y) - f(\y) \rbrace.
  \end{array}
\end{equation}
We refer \cite{RO-WE} to a comprehensive exposition of Generalized
Conjugacy.

Whenever $E=F$ in the coupling function \eqref{eq:def.cf},
both conjugations
(with respect to such coupling function) maps $\BR^E$ into itself.
So, in this case, it does make sense to ask for the existence of
fixed points of these conjugations, that is, $h\in \BR^E$ such that
\[
 \C_1 h=h \mbox{ or } \C_2 h=h.
\]
These fixed points will be called \emph{self-conjugated}
functions with respect to the coupling function $\Phi$. 
 Note that conjugation is order reversing. This feature of conjugation will allow
us to study self-conjugated functions using a variational principle.
This approach has already been used in the context of Fitzpatrick
functions~\cite{SV-Fix03}.

A coupling function $\Phi:E\times E \to\R $ is \emph{symmetric} if
\[
 \Phi(\x,\y)=\Phi(\y,\x), \qquad \forall\, \x,\y\in E.
\]
Note that in the symmetric case, both conjugations in \eqref{eq:def.c1}
coincides, that is, $\C_1=\C_2$.  This additional feature makes the
problem of finding fixed points more manageable.  Surprisingly, symmetry
of the coupling function guarantee existence of self-conjugated
functions.
From now on, conjugation with respect to a symmetric coupling function
$\Phi$ will be denoted by $\C$ ($\C=\C_1=\C_2$). Our aim is to prove
\begin{theorem}[{\sc main result}]
  \label{th:fix}Let $E$ be a non-empty set and $\Phi:E\times E \to \R$
  be symmetric. Take  $g\in \BR^E$.
  \begin{enumerate}
  \item If $\C g \leq g$, then there exists
        $h\in \BR^E$ such that
    \[ \C g\leq \C h=h\leq  g.\]
  \item If $g\in\C(\BR^E)$ and $g\leq \C g $, then there exists 
    $h\in \BR^E$ such that
    \[ g\leq \C h=h\leq \C g.\]
  \end{enumerate}
   In particular, there exists an $h\in\BR ^E$ self-conjugated, that
   is, $h=\C h$.
\end{theorem}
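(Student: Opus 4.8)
The plan is to deduce all three conclusions from part~1, which I would prove by an order-theoretic minimization (the promised variational principle) over a family of ``supersolutions.'' Throughout I would use only that $\C$ is order reversing, together with the elementary identity
\[
\C\Big(\inf_{i} f_i\Big)=\sup_i \C f_i ,
\]
which is immediate from the definition, since $-\inf_i f_i(\x)=\sup_i(-f_i(\x))$ and suprema commute. For part~1, assume $\C g\le g$ and set
\[
P=\{\,f\in\BR^E:\ f\le g,\ \C f\le f\,\}.
\]
Then $g\in P$, so $P\neq\emptyset$; moreover $f\in P$ is equivalent to $f\le g$ together with $\Phi(\x,\y)\le f(\x)+f(\y)$ for all $\x,\y\in E$, because $\C f\le f$ unwinds to $\Phi(\x,\y)-f(\x)\le f(\y)$. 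I would look for a minimal element of $P$ (ordered pointwise) and show that any such element is self-conjugated and lies in the required interval.

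First I would produce a minimal element by Zorn's lemma. Let $C\subseteq P$ be a chain and put $m=\inf_{f\in C}f$ pointwise; clearly $m\le g$. By the identity above, $\C m=\sup_{f\in C}\C f$, and for any $f,f'\in C$ the chain property gives $\C f'\le f$ (if $f\le f'$ then $\C f'\le \C f\le f$; if $f'\le f$ then $\C f'\le f'\le f$). Hence $\C m=\sup_{f'}\C f'\le \inf_f f=m$, so $m\in P$ is a lower bound of $C$ in $P$. Zorn's lemma then yields a minimal $h\in P$.

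The crux — and the only place symmetry is used — is to upgrade this minimal supersolution to a fixed point, i.e.\ to show $\C h=h$. We already have $\C h\le h$, so suppose $\C h(\y_0)<h(\y_0)$ for some $\y_0$. I would lower $h$ at the single point $\y_0$: set $\tilde h(\y_0)=v$ and $\tilde h=h$ elsewhere, choosing $v$ with $\C h(\y_0)\le v<h(\y_0)$ and $2v\ge\Phi(\y_0,\y_0)$; such $v$ exists because $\Phi(\y_0,\y_0)-h(\y_0)\le\C h(\y_0)<h(\y_0)$ forces $\Phi(\y_0,\y_0)/2<h(\y_0)$. Then $\tilde h\lneq h\le g$, and $\tilde h\in P$: the only constraints $\Phi(\x,\y)\le\tilde h(\x)+\tilde h(\y)$ that could fail are those involving $\y_0$, and by symmetry these are exactly the ones already measured by $\C h(\y_0)$ (for $\x\neq\y_0$, $v+h(\x)\ge\C h(\y_0)+h(\x)\ge\Phi(\y_0,\x)$, using $\Phi(\x,\y_0)=\Phi(\y_0,\x)$), together with the diagonal constraint $2v\ge\Phi(\y_0,\y_0)$. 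This contradicts minimality of $h$, so $\C h=h$. The hard part is precisely this perturbation: it converts a minimal supersolution into a genuine fixed point, and it works only because symmetry makes the constraints created by lowering $h$ at one point coincide with those governed by $\C h(\y_0)$. Finally, since $h\le g$ and $\C$ reverses order, $\C g\le\C h=h$, giving $\C g\le\C h=h\le g$, which is part~1.

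Parts~2 and the last assertion then follow formally. If $g\in\C(\BR^E)$ and $g\le\C g$, then the biconjugation identity $\C\C g=g$ for $g$ in the range of $\C$ gives $\C(\C g)=g\le\C g$, so part~1 applied to $\C g$ yields a self-conjugated $h$ with $g=\C(\C g)\le\C h=h\le\C g$, which is part~2. For the existence of a self-conjugated function, apply part~1 to the constant function $g\equiv+\infty$, for which $\C g\equiv-\infty\le g$; the resulting $h$ satisfies $\C h=h$.
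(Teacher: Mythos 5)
Your proposal is correct and follows essentially the same route as the paper: Zorn's lemma applied to the family of supersolutions $\{f:\C f\le f\}$ (with the same chain argument $\C f'\le f$), the same single-point downward perturbation bounded below by $\max\{\C h(\y_0),\Phi(\y_0,\y_0)/2\}$ to show minimal elements are fixed points, and the same use of $\C\C g=g$ on the range of $\C$ for part~2. The only cosmetic differences are that you restrict the poset to $\{f\le g\}$ from the start and verify membership of the perturbed function via the unwound inequality $\Phi(\x,\y)\le f(\x)+f(\y)$ rather than by computing the conjugate of a minimum.
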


The manuscript is organized as follows: In Section \ref{sec:proof} we
give some basic definitions, prove some technical results and our main
theorem. In Section \ref{sec:a.r} we 
apply the results of Section
~\ref{sec:proof} to the study of non-symmetric conjugations.
In Section \ref{sec:Fitz} we use the main result to extend to linear
topological spaces a fixed point theorem in  Fitzpatrick's family of
functions, previously proved in Banach spaces.

\section{Proof of the main result}
\label{sec:proof}
From now on, $E$ is a  non-empty set and $\Phi$ is a coupling function,
\begin{equation}
  \label{eq:cp.2}
\Phi:E \times E\to\R .  
\end{equation}
Both generalized conjugations (as the classical one) are order
reversing, that is, for any $h,f\in\BR^E$,
\begin{equation}
\label{eq:inv}
 h \leq f \Rightarrow \C_i f \leq  \C_i h,\quad\qquad i=1,2.
\end{equation}
Additionally, for any $h\in \BR^E$,
\begin{equation} \label{eq:J2}
    \C_2\C_1 h\leq h,\qquad
  \C_1\C_2 h\leq h.
\end{equation}
The indicator function of $A\subset E$, is $\delta_A: E \rightarrow \R
\cup \lbrace \infty \rbrace$, 
\begin{equation}
  \label{eq:def.delta}
 \delta_A(\x)=
 \begin{cases}
   0, & \mbox{if } \x\in A,\\
   \infty, & \mbox{otherwise}.
 \end{cases}  
\end{equation}

The following technical result will be needed in the sequel.
\begin{lemma}
  \label{lm:tech1}
  For any $h\in \BR^E$,  $\x_0\in E$ and  $i\in\{1,2\}$
  \begin{eqnarray*}
     \C_i h(\x_0)\leq h(\x_0)&\Rightarrow& \Phi(\x_0,\x_0)/2\leq
     h(\x_0),\\[.4em]
       \C_i h(\x_0)< h(\x_0)&\Rightarrow& \Phi(\x_0,\x_0)/2<
     h(\x_0).
  \end{eqnarray*}
\end{lemma}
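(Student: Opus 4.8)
The plan is to reduce everything to a single elementary observation: the value of a generalized conjugate at a point is bounded below by the term obtained by testing its defining supremum at that same point. Concretely, I would first record, for $i=1$, that by \eqref{eq:def.c1}
\[
\C_1 h(\x_0)=\sup_{\x\in E}\bigl\{\Phi(\x,\x_0)-h(\x)\bigr\}\geq \Phi(\x_0,\x_0)-h(\x_0),
\]
where the inequality simply keeps the single index $\x=\x_0$ in the supremum. For $i=2$ the identical manoeuvre, now testing the supremum defining $\C_2 h(\x_0)$ at $\y=\x_0$, gives
\[
\C_2 h(\x_0)=\sup_{\y\in E}\bigl\{\Phi(\x_0,\y)-h(\y)\bigr\}\geq \Phi(\x_0,\x_0)-h(\x_0).
\]
Thus in both cases one has the common lower bound $\C_i h(\x_0)\geq \Phi(\x_0,\x_0)-h(\x_0)$, and from here the two cases merge into one.

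Next I would feed in the hypotheses. Combining $\C_i h(\x_0)\leq h(\x_0)$ with the lower bound yields $\Phi(\x_0,\x_0)-h(\x_0)\leq h(\x_0)$, which rearranges to $\Phi(\x_0,\x_0)/2\leq h(\x_0)$; running the same chain with the strict hypothesis $\C_i h(\x_0)<h(\x_0)$ delivers $\Phi(\x_0,\x_0)/2<h(\x_0)$. This is the entire content of the lemma.

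The only step deserving real attention, and what I expect to be the sole, minor obstacle, is the extended-real bookkeeping, since $h(\x_0)$ may be infinite. When $h(\x_0)=+\infty$ both conclusions are immediate because $\Phi$ takes values in $\R$. When $h(\x_0)=-\infty$ the lower bound reads $\C_i h(\x_0)\geq \Phi(\x_0,\x_0)-h(\x_0)=+\infty$, so $\C_i h(\x_0)=+\infty>h(\x_0)$ and neither hypothesis can hold; the implications are then vacuously true. In the finite case $h(\x_0)\in\R$ the subtraction and the division by two are ordinary real operations, so the rearrangement is immediate. Verifying that these three regimes are handled consistently is all that remains, the substantive part of the argument being the one-line diagonal lower bound.
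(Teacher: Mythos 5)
Your proof is correct and follows essentially the same route as the paper's: both rest on the diagonal lower bound $\C_i h(\x_0)\geq \Phi(\x_0,\x_0)-h(\x_0)$ obtained by testing the supremum at $\x_0$, followed by rearrangement. Your explicit treatment of the $h(\x_0)=-\infty$ case (where the hypotheses become vacuous) is slightly more careful than the paper's, which only singles out $h(\x_0)=+\infty$, but the substance is identical.
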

\begin{proof}
  If $h(\x_0)=\infty$, then
  trivially $\Phi(\x_0,\x_0)/2<h(\x_0)$. Now, suppose that $ \C_i
  h(\x_0)\leq h(\x_0) < \infty$. Then, by definition \eqref{eq:def.c1}
  \[
   \Phi(\x_0, \x_0) - h(\x_0)\leq \C_i h(\x_0)\leq h(\x_0).
  \]
  Therefore, $ \Phi(\x_0, \x_0)\leq 2 h(\x_0)$. Analogously, if $ \C_i
  h(\x_0)< h(\x_0)$, then the first inequality in the above equation is
  strict and $\Phi(\x_0,\x_0)<2 h(\x_0)$.
\end{proof}

\medskip

To perform our variational analysis, we shall study the family of
functions which are greater than its conjugated.
\begin{definition}
  \label{df:h1}
    \quad$ \HT =  \{ h\in \BR^{E}\;|\; \C_1 h \leq h\}$.
\end{definition}
Latter on we will see that conjugation with respect to the second
variable, $\C_2$, could be used to define the same family.
Fixed points of a generalized (symmetric) conjugation will be obtained
by means of a variational principle, applied on $\HT$.

Note that $\HT$ is non-empty since the function $ h\equiv\infty$
belongs to $\HT$.
Next, we shall prove existence of minimal elements of $\HT$.
  Recall
that if the coupling function \eqref{eq:cp.2} is symmetric, then both
conjugations $\C_1$ and $\C_2$ are identical and we use the notation
$\C=\C_1=\C_2$.
\begin{lemma}
  \label{lm:indutive} Suppose that the coupling function $\Phi:E\times
  E\to \R$ is symmetric.  The family $\HT$ (Def.\ \ref{df:h1}) is
  (downward) \emph{inductively ordered}, i.~e., any totally ordered
  family $\{h_\alpha\}_{\alpha\in \Lambda}\subset \HT$ has a lower
  bound on $\HT$.

  If $g\in\HT$, that is, $ \C g\leq g$, then there exists a minimal $h
  \in \HT$ such that
  \[
  \C g \leq h\leq g.
  \]
  In particular, $\HT$ has minimal elements.
\end{lemma}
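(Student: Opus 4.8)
The plan is to prove both assertions by identifying the correct candidate lower bound for chains and then invoking Zorn's lemma.

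For the inductive-ordering claim, given a totally ordered family $\{h_\alpha\}_{\alpha\in\Lambda}\subset\HT$, I would take as candidate lower bound the pointwise infimum $h=\inf_\alpha h_\alpha\in\BR^E$. The difficulty is that conjugation reverses order (see \eqref{eq:inv}), so membership $h\in\HT$ is not automatic; this is the heart of the argument. The key observation is that generalized conjugation turns infima into suprema: since $\Phi(\x,\y)$ is a finite real number, subtracting it commutes with suprema, and the definition \eqref{eq:def.c1} gives, for every $\y\in E$,
\[
\C h(\y)=\sup_{\x\in E}\{\Phi(\x,\y)-\inf_\alpha h_\alpha(\x)\}=\sup_\alpha \C h_\alpha(\y).
\]
Hence it suffices to prove $\C h_\alpha(\y)\le h_\beta(\y)$ for all indices $\alpha,\beta$ and all $\y$. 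Here I would exploit the total ordering of the family: for fixed $\alpha,\beta$, either $h_\alpha\le h_\beta$ or $h_\beta\le h_\alpha$. In the first case $\C h_\alpha\le h_\alpha\le h_\beta$, using $h_\alpha\in\HT$; in the second case $\C h_\alpha\le \C h_\beta\le h_\beta$, using \eqref{eq:inv} together with $h_\beta\in\HT$. Either way $\C h_\alpha\le h_\beta$, so $\C h\le\inf_\beta h_\beta=h$, i.e. $h\in\HT$. Since $h$ is clearly a lower bound of the chain, $\HT$ is downward inductively ordered.

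For the existence of a minimal element below $g$, I would apply Zorn's lemma to the subfamily $S=\{\,h\in\HT\mid h\le g\,\}$, ordered pointwise. It is non-empty because $g\in S$, and by the first part the pointwise infimum of any chain in $S$ lies in $\HT$ and remains $\le g$, so it is a lower bound in $S$; thus $S$ admits a minimal element $h$. Two checks remain. First, the sandwich $\C g\le h\le g$ holds automatically: from $h\le g$ and order reversal \eqref{eq:inv} one gets $\C g\le\C h$, and since $h\in\HT$ we have $\C h\le h$, whence $\C g\le h$. Second, $h$ is minimal not merely in $S$ but in all of $\HT$: if $h'\in\HT$ satisfies $h'\le h$, then $h'\le h\le g$ forces $h'\in S$, and minimality of $h$ in $S$ yields $h'=h$. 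The concluding ``in particular'' statement then follows by taking $g\equiv\infty$, which belongs to $\HT$.

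The main obstacle is the first step, namely showing that the pointwise infimum of a chain stays in $\HT$ despite the order-reversing nature of $\C$. The resolution rests on two ingredients that must be combined: the identity that pushes the infimum inside $\C$ out as a supremum of the conjugates, and an essential use of the total ordering to compare $\C h_\alpha$ with an arbitrary $h_\beta$. Without total ordering the inequality $\C h_\alpha\le h_\beta$ would fail in general, so the chain hypothesis is used in a genuinely crucial way; the remaining Zorn and minimality arguments are then routine.
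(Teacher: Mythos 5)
Your proof is correct and follows essentially the same route as the paper: the pointwise infimum of a chain is the lower bound, the key identity $\C(\inf_\alpha h_\alpha)=\sup_\alpha \C h_\alpha$ reduces matters to the cross-inequality $\C h_\alpha\le h_\beta$, which is established by the same two-case use of total ordering, and Zorn's lemma plus order reversal finish the argument. The only cosmetic difference is that you apply Zorn to the subfamily $\{h\in\HT\mid h\le g\}$ and then verify minimality in all of $\HT$, whereas the paper invokes a version of Zorn's lemma that directly produces a minimal element of $\HT$ below $g$.
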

\begin{proof}
  Let $\{h_\alpha\}_{\alpha\in \Lambda}$ be a totally ordered
  subset of $\HT$.

  First 
    we claim that
  \[ \C {h_\alpha}\leq
   h_\beta,\qquad \forall\; \alpha,\beta \in \Lambda.
  \]
  To check this claim, take $\lambda,\mu\in \Lambda$ and suppose that
  $h_\lambda\leq h_\mu$.
  Since the conjugation reverse the order, $\C h{_\mu}\leq \C
  {h_\lambda}$. As $\C h_\lambda\leq h_\lambda$ (because
  $h_\lambda\in \HT$), we conclude that
  \[
  \C h_\mu\leq \C h_\lambda \leq {h_\lambda} \leq {h_\mu}.
  \]
  Therefore, $ \C  h_\lambda\leq h_\mu $ and $\C h_\mu \leq
  {h_\lambda}$.  To end the proof of the first claim, use the fact
  that $\{h_\alpha\}_{\alpha\in \Lambda}$ is totally ordered.

  Now define
  \[
  f =\inf_{\alpha\in \Lambda}\;\; {h_\alpha}.
  \]
  Using definition \eqref{eq:def.c1} we get
  \[ \C f =\sup_{\alpha\in \Lambda}\;\; {\C h_\alpha},
  \]
  which, combined with the previous claim and the definition of $f$ yields
  \[
    \C f\leq f.
  \] 
  So, $f \in \HT$ and is a lower bound for the family
  $\{h_\alpha\}_{\alpha\in \Lambda}$.

  To prove the second part of the lemma, use Zorn's Lemma (see
  \cite[Theorem 2, pp 154 and Corollary 1, pp 155]{bourbaki}) to
  conclude that for any $g\in \HT$ there exists a minimal $h\in \HT$
  such that $h\leq g$. Applying $\C$ in this inequality we obtain $\C
  g\leq \C h\leq h$, where the second inequality comes from the
  inclusion $h\in\HT$. To end the proof, note that $\HT$ is non-empty.
\end{proof}

\begin{lemma}
  \label{lm:minimal}  Suppose that the coupling function  $\Phi:E\times E\to \R$
  is symmetric.
  If $h=\C h$ then $h$ is a minimal element of $\HT$.
\end{lemma}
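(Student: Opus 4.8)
The plan is to unwind the definition of minimality directly and reduce everything to an order chase that exploits the order-reversing property \eqref{eq:inv} together with the fixed-point hypothesis. First I would record that $h$ does belong to the family: since $h=\C h$ we have in particular $\C h\leq h$, so $h\in\HT$ by Definition \ref{df:h1}. It therefore makes sense to ask whether $h$ is minimal, i.e.\ whether there is no element of $\HT$ strictly below it.

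The heart of the argument is the following: take an arbitrary $f\in\HT$ with $f\leq h$, and aim to conclude $f=h$. Applying the order-reversing conjugation $\C$ to the inequality $f\leq h$ gives $\C h\leq \C f$. Because $h$ is self-conjugated, the left-hand side is just $h$, so $h\leq \C f$. On the other hand, $f\in\HT$ means exactly $\C f\leq f$. Stringing these together with the assumption $f\leq h$, I obtain the cyclic chain
\[
f\leq h\leq \C f\leq f,
\]
which forces equality throughout; in particular $f=h$. Hence no element of $\HT$ lies strictly below $h$, which is precisely the assertion that $h$ is minimal in $\HT$.

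I do not expect a genuine obstacle here: the symmetry hypothesis is used only to guarantee $\C=\C_1=\C_2$, so that a single conjugation $\C$ both defines the family $\HT$ and is order reversing, and the whole proof is a three-line manipulation of inequalities in $\BR^E$. The only point requiring a little care is the bookkeeping of which inequality comes from which fact --- the step $h\leq\C f$ relies on the fixed-point identity $h=\C h$ (not merely $h\in\HT$), whereas $\C f\leq f$ relies on $f\in\HT$ --- but once these are separated the conclusion is immediate and uses nothing beyond \eqref{eq:inv} and the definition of $\HT$.
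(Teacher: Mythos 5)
Your proof is correct and is essentially identical to the paper's: both apply the order-reversing property to $f\leq h$, substitute $h=\C h$, and close the chain with $\C f\leq f$ from $f\in\HT$ to force $f=h$. The only (harmless) addition is your explicit preliminary remark that $h\in\HT$, which the paper leaves implicit.
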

\begin{proof}
  Suppose that $g\in \HT$ and $g \leq h$.
  Applying $\C$  on this inequality gives $  \C h\leq  \C g$.
  Therefore,
  \[
  h=\C h \leq \C g \leq g
  \]
  where the last inequality follows from the assumption $g\in\HT$.
  Altogether we have $g\leq h$ and $h\leq g$.  So, $g=h$ and $h$ is
  minimal in $\HT$.
\end{proof}

\bigskip

To prove Theorem \ref{th:fix} now, it is sufficient to prove the converse of
Lemma \ref{lm:minimal}.

\begin{lemma}
  \label{lm:main}
  Suppose that the coupling function  $\Phi:E\times E\to \R$
  is symmetric. Then
   $h\in \BR^E$ is a minimal element of $\HT$ if and only if $h=\C h$.
\end{lemma}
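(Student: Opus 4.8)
The forward implication is exactly Lemma \ref{lm:minimal}, so the plan is to establish only the converse: that a minimal $h\in\HT$ satisfies $h=\C h$. I would argue by contradiction. Assume $h$ is minimal in $\HT$ but $\C h\neq h$. Since $h\in\HT$ means $\C h\leq h$, the failure of equality produces a point $\x_0\in E$ with $\C h(\x_0)<h(\x_0)$. The strategy is to lower the value of $h$ at the single point $\x_0$ and show that the resulting function still lies in $\HT$ while being strictly below $h$, contradicting minimality.

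Concretely, I would first invoke Lemma \ref{lm:tech1} at $\x_0$: from $\C h(\x_0)<h(\x_0)$ it yields $\Phi(\x_0,\x_0)/2<h(\x_0)$. Hence the quantity $c:=\max\{\C h(\x_0),\,\Phi(\x_0,\x_0)/2\}$ satisfies $c<h(\x_0)$. I then define $\tilde h$ to agree with $h$ off $\x_0$ and to take the value $c$ at $\x_0$; by construction $\tilde h\leq h$ and $\tilde h(\x_0)<h(\x_0)$, so $\tilde h\neq h$.

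The heart of the argument is checking $\C\tilde h\leq\tilde h$, i.e. that $\tilde h\in\HT$. Writing out $\C\tilde h(\y)=\sup_\x\{\Phi(\x,\y)-\tilde h(\x)\}$, I would split the supremum into the contribution of $\x=\x_0$ and of $\x\neq\x_0$, obtaining $\C\tilde h(\y)=\max\{\,\sup_{\x\neq\x_0}(\Phi(\x,\y)-h(\x)),\ \Phi(\x_0,\y)-c\,\}$. For $\y\neq\x_0$ the first term is dominated by $\C h(\y)\leq h(\y)=\tilde h(\y)$, while the second term requires $\Phi(\x_0,\y)-h(\y)\leq c$; here is where symmetry enters, since $\sup_\y\{\Phi(\x_0,\y)-h(\y)\}=\C h(\x_0)\leq c$ precisely because $\Phi(\x_0,\y)=\Phi(\y,\x_0)$. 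For $\y=\x_0$ the first term is again at most $\C h(\x_0)\leq c=\tilde h(\x_0)$, and the second, $\Phi(\x_0,\x_0)-c$, is at most $c$ exactly because $c\geq\Phi(\x_0,\x_0)/2$. Thus $\tilde h\in\HT$, and since $\tilde h<h$ this contradicts the minimality of $h$.

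I expect the delicate step to be the verification at $\y=\x_0$ together with the use of symmetry to bound $\Phi(\x_0,\y)-h(\y)$: the two lower bounds forced on $c$ (namely $\C h(\x_0)$ and $\Phi(\x_0,\x_0)/2$) arise from genuinely different cases, and it is Lemma \ref{lm:tech1} that guarantees both remain strictly below $h(\x_0)$, so that a valid choice of $c$ exists at all. Care with the extended-real arithmetic (for instance when $\C h(\x_0)=-\infty$ or $h(\x_0)=\infty$) will be needed, but it is routine.
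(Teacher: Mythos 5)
Your proposal is correct and takes essentially the same route as the paper: the paper perturbs $h$ by setting $g=\min\{h,\delta_{\x_0}+t_0\}$ with $\max\{\C h(\x_0),\Phi(\x_0,\x_0)/2\}\leq t_0<h(\x_0)$, which is exactly your $\tilde h$ (with $c$ playing the role of $t_0$), and verifies $\C g\leq g$ by the same case analysis before invoking minimality. The only cosmetic difference is that the paper bounds the term $\Phi(\x,\x_0)-t_0$ via $\Phi(\x,\x_0)-\C h(\x_0)\leq (\C)^2h(\x)\leq h(\x)$, whereas you appeal to symmetry directly through $\sup_{\y}\{\Phi(\x_0,\y)-h(\y)\}=\C h(\x_0)$; these are the same estimate.
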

\begin{proof}
  We already know, by Lemma \ref{lm:minimal}, that if  $h=\C h$ then
  $h$ is minimal in $\HT$.

  Suppose now that $h$ is minimal in $\HT$.  We shall prove that
  \begin{equation}
    \label{eq:abs.1}
  \C h(\x_0)< h(\x_0)    
  \end{equation}
  cannot hold.  If this  inequality holds, then
  by  Lemma \ref{lm:tech1}, $\Phi(\x_0,\x_0)/2< h(\x_0)$. Hence there exists
  $t_0\in \R$ such that
  \begin{equation}
    \label{eq:abs.2}
     \max\left\{ \C h(\x_0), \Phi(\x_0,\x_0)/2\right\}\;\leq t_0 < h(\x_0).
  \end{equation}
  Define
  \begin{equation}
    \label{eq:abs.3}
    g=\min \{h,\delta_{\x_0}+t_0\}.   
  \end{equation}
  We will prove that $g\in \HT$, and this will lead to a contradiction.
  Using \eqref{eq:def.c1}, 
we get
  \begin{eqnarray*}
    \C g (\x)&=&\max \{\C h(\x),\C (\delta_{\x_0}+t_0)(\x)\}\\
    &=&\max \{\C h (\x),\Phi(\x,\x_0)-t_0\}.
  \end{eqnarray*}
  For any $\x\in E$,
  \[
  \Phi(\x,\x_0)-t_0 \leq  \Phi(\x,\x_0)-\C h (\x_0)
  \leq  (\C)^2 h (r) \leq   h(r),
  \]
   and  $\C h\,(r)\leq h(r)$.
  Hence,
  \[
   \C g\leq h.
  \]
  As $\Phi(\x_0,\x_0)-t_0\leq t_0$
  and $\C h (\x_0)\leq t_0$, we also conclude that
  \[
  \C g \leq \delta_{\x_0}+t_0  .
  \]
  Combining the two above inequalities with \eqref{eq:abs.3}
  we obtain $\C g\leq g$. Therefore,
  \[
  g\in \HT.
  \]  
  As $g\leq h$ and $h$ is minimal in $\HT$,  $g=h$ and, 
  in particular,
  \[
  h(\x_0)=g(\x_0).
  \]
  From the definition of $g$ we have $g(\x_0)=t_0<h(\x_0)$, which is a
  contradiction. So, \eqref{eq:abs.1} can not hold in any $r\in
  E$. As $\C h\leq h$, we conclude that $\C h=h$.
\end{proof}

\begin{proof}[Proof of Theorem \ref{th:fix}]
Combining Lemma \ref{lm:indutive} with Lemma \ref{lm:main} we conclude
that item 1 holds and that there exists a self-conjugated function
$h=\C h$.

To prove item 2, assume that $g=\C g_0$ and $g\leq \C g$. Applying $\C$
on this inequality we obtain $(\C)^{2} g\leq \C g$, which is equivalent to
\[ 
\C (\C)^2\, g_0\leq (\C)^2\, g_0.
\] 
Applying item 1 to $(\C)^2 g_0$ we conclude that there exists $h$,
\[
(\C)^3 g_0\leq
h=\C h\leq (\C)^2\, g_0.
\] 
Note that $(\C)^3 g_0=(\C)^2\C\, g_0\leq \C\,g_0$. 
Applying $\C$ to the inequality $(\C) ^2 g_0\leq g_0$ we also have
$\C g_0\leq (\C)^3 g_0$. Hence\footnote{In fact, $(\C)^3=\C$, which
  is a property of any symmetric conjugation.},
 $(\C)^3 g_0=\C g_0$, which combined
with the above equation yields
\[ g=\C g_0\leq h=\C h\leq (\C)^2\, g_0=\C g\,.\tag*{\qedhere} \]
\end{proof}

\section{Additional results}
\label{sec:a.r}
Here we present some additional results to Section~\ref{sec:proof}
which were not necessary for proving the main theorem.
Non-symmetric conjugation will also be discussed with more details.

\begin{proposition}
  \label{pr:h1.h2}
  For any $h\in \BR^E$,
  the following conditions are equivalent
  \begin{enumerate}
  \item  $ \C_1 h\leq h$,
  \item $\C_2 h\leq h$,
  \item $\max\{ \C_1 h, \C_2 h\}\leq h$.
  \end{enumerate}
\end{proposition}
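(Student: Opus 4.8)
The plan is to exploit the trivial implications together with a single symmetric argument. Since $\max\{\C_1 h,\C_2 h\}\geq \C_1 h$ and $\max\{\C_1 h,\C_2 h\}\geq \C_2 h$ pointwise, condition (3) immediately forces both (1) and (2); so the only real content is the equivalence of (1) and (2), after which (3) follows by combining them. Concretely, it suffices to establish $(1)\Rightarrow(2)$ and $(2)\Rightarrow(1)$, and then to observe that (1) and (2) holding simultaneously is exactly (3).

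The core step is $(1)\Rightarrow(2)$, and here the two structural facts recorded in \eqref{eq:inv} and \eqref{eq:J2} do all the work. Assume $\C_1 h\leq h$. Applying the order-reversing conjugation $\C_2$ to this inequality, \eqref{eq:inv} gives $\C_2 h\leq \C_2\C_1 h$. Now invoke the composition bound $\C_2\C_1 h\leq h$ from \eqref{eq:J2}, and chaining the two inequalities yields $\C_2 h\leq h$, which is (2). The reverse implication $(2)\Rightarrow(1)$ is obtained by the mirror-image argument: apply $\C_1$ to $\C_2 h\leq h$, use order reversal to get $\C_1 h\leq \C_1\C_2 h$, and then use the second bound $\C_1\C_2 h\leq h$ in \eqref{eq:J2} to conclude $\C_1 h\leq h$.

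With $(1)\Leftrightarrow(2)$ in hand, the proof closes quickly: if either holds then both hold, so $\C_1 h\leq h$ and $\C_2 h\leq h$ simultaneously, whence $\max\{\C_1 h,\C_2 h\}\leq h$, i.e.\ (3). Together with the two trivial implications $(3)\Rightarrow(1)$ and $(3)\Rightarrow(2)$ noted at the outset, this gives the full cycle of equivalences. I do not expect any genuine obstacle in this argument; the only point requiring attention is the correct pairing in \eqref{eq:J2}, namely that one must apply the \emph{opposite} conjugation to the hypothesis (using $\C_2$ against a bound on $\C_1 h$, and $\C_1$ against a bound on $\C_2 h$) so that the composition $\C_2\C_1$ or $\C_1\C_2$ is precisely the one appearing in that inequality. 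Notably, symmetry of $\Phi$ is never used, so the statement holds for an arbitrary coupling function on $E\times E$.
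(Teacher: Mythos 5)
Your proof is correct and follows essentially the same route as the paper: the nontrivial implications $(1)\Rightarrow(2)$ and $(2)\Rightarrow(1)$ are obtained by applying the opposite conjugation to the hypothesis, using order reversal from \eqref{eq:inv} and then the appropriate composition bound from \eqref{eq:J2}, with (3) following trivially. Your closing remark that symmetry of $\Phi$ is never needed is also consistent with the paper, which states the proposition for a general coupling function.
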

\begin{proof}
  Suppose that 1 holds, $ \C_1 h\leq h$.
  As $\C_2$ is order reversing,  applying $\C_2$ on this inequality 
  we get
  \[\C_2 h\leq   \C_2\C_1 h,
  \]
  which, combined with the first inequality in \eqref{eq:J2} yields
  $ \C_2 h\leq h$. So condition 1 implies condition 2.
  To prove that condition 2 implies 1 apply $\C_1$ on both sides of
  the inequality $\C_2 h\leq h$ and follows the same reasoning.

  Condition 1 or 2, being equivalent,  implies condition 3, which is
  equivalent  to condition 1 and 2.
\end{proof}

We define the {\it symmetrization} of $\Phi$ as $\Phis $,
\begin{equation}
  \label{eq:def.sym}
  \Phis :E\times E\to \R,\qquad \Phis(\x,\y)=\max\{
\Phi(\x,\y), \Phi(\y,\x)\}.
\end{equation}
Notice that $\Phis $ is symmetric. Direct
calculation gives
\begin{equation}
 \label{eq:c.til} \mathcal{C}^{\Phis }h=\max\{ \C_1 h, \C_2 h\}\,,
\end{equation}
which, combined with  Definition \ref{df:h1} yields
\[
\mathcal{H}^{\Phis } = \{ h\in \BR^{E}\;|\;
\mathcal{C}^{\Phis }h \leq h \}
=\{ h\in \BR^{E}\;|\;\max\{ \C_1 h, \C_2 h\} \leq h \}
.
\]
Using Proposition \ref{pr:h1.h2} we obtain alternative
characterizations of $\HT$: 
\begin{equation}
  \label{eq:def.h}
\begin{array}{rcl}
  \HT &= & \{ h\in \BR^{E}\;|\; \C_1 h \leq h\}\\
  &=& \{ h\in \BR^{E}\;|\; \C_2 h \leq h\}\\
  &=& \{ h\in \BR^{E}\;|\;\max\{ \C_1 h, \C_2 h\} \leq h \}=
  \mathcal{H}^{\Phis }\;.
\end{array}  
\end{equation}
With 
the above equation, now it is straightforward to generalize
Lemma~\ref{lm:indutive} and Lemma~\ref{lm:main} to non-symmetric conjugations.
\begin{proposition}
  \label{pr:gen}
  Let $\Phi:E\times E\to \R$ be a generic coupling function. Then
  \begin{enumerate}
  \item The family $\HT$ is (downward) inductively ordered.
  \item For any $g\in \HT$ there exists a minimal $h\in \HT$,  such that,
    \[ \max\{ \C_1 g, \C_2 g\}\leq h\leq g.\]
  \item The family $\HT$ has minimal elements
  \item $h\in \HT$ is minimal if and only if\;
    $ \mathcal{C}^{\Phis }h=\max\{ \C_1 h, \C_2 h\} = h$.
  \end{enumerate}
\end{proposition}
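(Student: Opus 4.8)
The plan is to reduce the entire statement to the symmetric case already settled in Section~\ref{sec:proof}, exploiting the identification
$\HT = \mathcal{H}^{\Phis}$ furnished by equation~\eqref{eq:def.h}. By construction \eqref{eq:def.sym}, the symmetrized coupling $\Phis$ \emph{is} symmetric, so Lemma~\ref{lm:indutive} and Lemma~\ref{lm:main} apply verbatim to the family $\mathcal{H}^{\Phis}$ together with its conjugation $\mathcal{C}^{\Phis}$. The only work left is then to transport each conclusion back to the original, possibly non-symmetric, data, and for this the sole tool needed is the formula $\mathcal{C}^{\Phis} h = \max\{\C_1 h, \C_2 h\}$ from \eqref{eq:c.til}.

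Carrying this out, for item~1 I would simply observe that $\HT$ and $\mathcal{H}^{\Phis}$ coincide as sets, so the assertion that $\mathcal{H}^{\Phis}$ is downward inductively ordered (first part of Lemma~\ref{lm:indutive}) is literally the assertion that $\HT$ is. For item~2 I would apply the second part of Lemma~\ref{lm:indutive} to $\Phis$: given $g\in \HT = \mathcal{H}^{\Phis}$, that lemma produces a minimal $h\in \mathcal{H}^{\Phis} = \HT$ with $\mathcal{C}^{\Phis} g \leq h \leq g$, and substituting $\mathcal{C}^{\Phis} g = \max\{\C_1 g, \C_2 g\}$ gives exactly the stated sandwich. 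Item~3 is then immediate, since $\HT$ is non-empty (it contains $h\equiv\infty$) and item~2 already yields a minimal element below any chosen $g$.

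For item~4 I would invoke Lemma~\ref{lm:main} applied to the symmetric coupling $\Phis$, which states that $h\in \mathcal{H}^{\Phis}$ is minimal if and only if $\mathcal{C}^{\Phis} h = h$; rewriting $\mathcal{H}^{\Phis} = \HT$ and $\mathcal{C}^{\Phis} h = \max\{\C_1 h, \C_2 h\}$ turns this into the claimed equivalence. There is essentially no computational obstacle here: the whole content of the proposition has been pre-packaged into the identities \eqref{eq:c.til} and \eqref{eq:def.h}, whose derivation rested on Proposition~\ref{pr:h1.h2}. The one point that must be checked with care is that the symmetry hypothesis of Lemmas~\ref{lm:indutive} and~\ref{lm:main} is genuinely satisfied by $\Phis$ — which it is, by the very definition \eqref{eq:def.sym} — so that those lemmas may be applied to $\mathcal{H}^{\Phis}$ without any modification.
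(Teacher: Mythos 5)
Your proposal is correct and is precisely the argument the paper intends: the proposition is stated right after \eqref{eq:c.til} and \eqref{eq:def.h} with the remark that it is "straightforward to generalize" Lemmas~\ref{lm:indutive} and~\ref{lm:main} via the symmetrization $\Phis$, which is exactly the reduction you carry out.
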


Also in the non-symmetric case, 
fixed points of the conjugations $\C_1$ or $\C_2$
are minimal elements of $\mathcal{H}^{\Phis }$.
\begin{proposition}
  \label{pr:fix.ns}
  If $h=\C_1 h$ or $h=\C_2 h$, then $h\in\mathcal{H}^{\Phi}$ and is
  minimal. 
\end{proposition}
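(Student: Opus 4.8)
The plan is to reduce everything to the characterizations already established in this section, since being a fixed point of either conjugation is a strictly stronger condition than belonging to $\HT$ and than being minimal. I would first treat the case $h=\C_1 h$. The membership claim is immediate: the hypothesis gives $\C_1 h=h$, hence in particular $\C_1 h\leq h$, which is exactly Definition \ref{df:h1}, so $h\in\HT$. The point of the three equivalent descriptions in \eqref{eq:def.h} (coming from Proposition \ref{pr:h1.h2}) is that membership in $\HT$ automatically yields $\C_2 h\leq h$ as well, even though we only assumed something about $\C_1$.

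Having both $\C_1 h=h$ and $\C_2 h\leq h$ in hand, I would compute $\max\{\C_1 h,\C_2 h\}=\max\{h,\C_2 h\}=h$, the last equality because $\C_2 h\leq h$. By \eqref{eq:c.til} this says $\mathcal{C}^{\Phis}h=h$, which is precisely the minimality criterion of Proposition \ref{pr:gen} item 4. Therefore $h$ is a minimal element of $\HT=\mathcal{H}^{\Phis}$, completing this case.

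The second case $h=\C_2 h$ is entirely symmetric: now the hypothesis $\C_2 h\leq h$ directly places $h$ in $\HT$ via the second line of \eqref{eq:def.h}, the equivalence then supplies $\C_1 h\leq h$, and one gets $\max\{\C_1 h,\C_2 h\}=\max\{\C_1 h,h\}=h$, again minimality by Proposition \ref{pr:gen} item 4. One could phrase both cases at once by noting that a fixed point of $\C_i$ makes the $i$-th term of the maximum equal to $h$ while the other term is dominated by $h$ thanks to the equivalences.

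There is no genuinely hard step here; the real work was front-loaded into Proposition \ref{pr:h1.h2} (the interchangeability of $\C_1$ and $\C_2$ in defining $\HT$) and into the minimality characterization of Proposition \ref{pr:gen}. The only point I would be careful about is not conflating the two different facts being asserted: \emph{membership} in $\HT$ needs just one inequality $\C_i h\leq h$, whereas \emph{minimality} needs the equality of the symmetrized conjugate with $h$, and it is the fixed-point hypothesis (not mere membership) that forces this equality.
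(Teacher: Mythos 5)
Your argument is correct and follows essentially the same route as the paper: deduce membership in $\HT$ from $\C_1 h=h$, use the equivalences of Proposition \ref{pr:h1.h2} (equivalently \eqref{eq:def.h}) to get $\C_2 h\leq h$, conclude $\mathcal{C}^{\Phis}h=\max\{\C_1 h,\C_2 h\}=h$, and invoke the minimality characterization for the symmetrized coupling. The paper cites Lemma \ref{lm:minimal} where you cite Proposition \ref{pr:gen} item 4, but these amount to the same step.
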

\begin{proof}
  If $h=\C_1 h$ then, in particular $\C_1 h\leq h$. Hence by
  \eqref{eq:def.h} $h\in \mathcal{H}^{\Phis }$ and so
  \[\C_2 h\leq  h= \C_1 h  \]
  which implies $\max\{ \C_1 h, \C_2 h\}=h$ so that by
  \eqref{eq:c.til} $\mathcal{C}^{\Phis }h=h$.  Now apply
  Lemma \ref{lm:minimal} to conclude that $h$ is minimal in
  $\mathcal{H}^{\Phis }$.
  The case $\C_2 h=h$ follows the same proof, interchanging $\C_1$ and
  $\C_2$.
\end{proof}

A natural question is whether Lemma~\ref{lm:main} can be extended to a
non-symmetric $\Phi$. The answer is negative, as exposed in the next
example.

\noindent
Take  $E=\{a,b\}$ with $a\neq b$ and $\Phi:E\times E\to \R$
\[
\begin{array}{ll}
  \Phi(a,a)=0, &\Phi(a,b)=-3, \\[.5em]
  \Phi(b,a)=0,  &\Phi(b,b)=-3. 
\end{array}
\]
For the function $h:E\to\BR$, $h(a)=1$ and $h(b)=-1$,
we have
\[
\begin{array}{ll}
  \C_1 h(a)=1, & \C_1 h(b)=-2, \\[.5em]
  \C_2 h(a)=-1,  &\C_2 h(b)=-1. 
\end{array}
\]
As $h=\max\{ \C_1 h,\C_2 h\}$, by \eqref{eq:c.til} and
Lemma~\ref{lm:minimal}, $h$ is minimal in $\mathcal{H}^{\Phis}$ but is not a fixed point of $\C_1$ or $\C_2$.

Lemma~\ref{lm:tech1} applied to family $\mathcal{H}^{\Phi}$ yields the
following result, which relates these functions $h\in
\mathcal{H}^{\Phi}$ with the coupling function $\Phi$ and the
generalized subdifferential.
\begin{corollary}
  \label{cr:lm.tech1}
  For any $h\in \mathcal{H}^{\Phi}$:
  \begin{enumerate}
  \item  $ \Phi(r,r)/2\leq h(r)$ for all $r\in E$.
  \item If  $ h(r_0)=\Phi(r_0,r_0)/2$, then
  \[ \C_1 h(r_0)=\C_2 h(r_0)=\Phi(r_0,r_0)/2\]
  and $\x_0\in\partial^\Phi _1 h(\x_0)$, $r_0\in\partial^\Phi _2 h(\x_0)$,
 that is, 
 for all $r\in E$
  \[ h(r_0)+\big[\Phi(r,r_0)-\Phi(r_0,r_0)\big]\leq h(r),\]
 \[ h(r_0)+\big[\Phi(r_0,r)-\Phi(r_0,r_0)\big]\leq h(r).\]
  \end{enumerate}
\end{corollary}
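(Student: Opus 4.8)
The plan is to read both items directly off Lemma~\ref{lm:tech1} and the characterization~\eqref{eq:def.h} of $\mathcal{H}^\Phi$, so that no new machinery is required. For item~1 I would use that, by~\eqref{eq:def.h}, membership $h\in\mathcal{H}^\Phi$ is equivalent to $\C_1 h\leq h$; in particular $\C_1 h(r)\leq h(r)$ for every $r\in E$. Applying the first implication of Lemma~\ref{lm:tech1} with $i=1$ and $r$ in the role of $\x_0$ yields $\Phi(r,r)/2\leq h(r)$, which is exactly item~1.

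For item~2, fix $r_0$ with $h(r_0)=\Phi(r_0,r_0)/2$. Since $\Phi$ takes values in $\R$, this value is finite and $\Phi(r_0,r_0)=2h(r_0)$, an identity I will substitute throughout. The upper bounds $\C_1 h(r_0)\leq h(r_0)$ and $\C_2 h(r_0)\leq h(r_0)$ are again just~\eqref{eq:def.h}. For the reverse bounds I would invoke the \emph{contrapositive} of the second (strict) implication of Lemma~\ref{lm:tech1}: as $\Phi(r_0,r_0)/2=h(r_0)$ is not strictly below $h(r_0)$, the strict inequality $\C_i h(r_0)<h(r_0)$ cannot hold, so $\C_i h(r_0)\geq h(r_0)$ for $i=1,2$. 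Combining the two bounds gives $\C_1 h(r_0)=\C_2 h(r_0)=h(r_0)=\Phi(r_0,r_0)/2$, the first assertion of item~2.

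The subgradient inequalities then follow by expanding the suprema in~\eqref{eq:def.c1}. From $\C_1 h(r_0)=\sup_{r\in E}\{\Phi(r,r_0)-h(r)\}=h(r_0)$, every term of the supremum is at most its value, so $\Phi(r,r_0)-h(r)\leq h(r_0)$ for all $r\in E$; rearranging and replacing $\Phi(r_0,r_0)$ by $2h(r_0)$ turns this into $h(r_0)+[\Phi(r,r_0)-\Phi(r_0,r_0)]\leq h(r)$, the first displayed inequality. The second one is obtained identically, starting instead from $\C_2 h(r_0)=\sup_{s\in E}\{\Phi(r_0,s)-h(s)\}=h(r_0)$, where the running index is now the second argument of $\Phi$.

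There is no genuinely hard step; the only points requiring care are logical and bookkeeping ones: using the strict half of Lemma~\ref{lm:tech1} in contrapositive form to produce the reverse inequality $\C_i h(r_0)\geq h(r_0)$, checking that $h(r_0)$ is finite so the substitution $\Phi(r_0,r_0)=2h(r_0)$ and the rearrangement are valid (the possibilities $h(r)=\pm\infty$ cause no trouble, since the extended-real inequalities remain true), and keeping track of which argument of $\Phi$ carries the supremum index for $\C_1$ versus $\C_2$.
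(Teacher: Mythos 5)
Your proof is correct and follows essentially the same route as the paper's: item~1 from the first implication of Lemma~\ref{lm:tech1} together with Definition~\ref{df:h1}, and item~2 by combining the contrapositive of the strict implication of Lemma~\ref{lm:tech1} with the characterization~\eqref{eq:def.h} to get $\C_1 h(r_0)=\C_2 h(r_0)=h(r_0)$, then expanding the suprema in~\eqref{eq:def.c1} and substituting $\Phi(r_0,r_0)=2h(r_0)$. Your explicit attention to the finiteness of $h(r_0)$ and to which argument of $\Phi$ carries the supremum index is a welcome (if minor) addition to the paper's terser write-up.
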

\begin{proof}
  Item 1 follows directly from  Definition \ref{df:h1} and  the first
  implication on Lemma \ref{lm:tech1}.

  To prove item 2, first use the second implication on  Lemma
  \ref{lm:tech1} to conclude that $\C_i h(\x_0)\geq
  h(\x_0)$. Now use \eqref{eq:def.h} to conclude that this inequality
  holds as an equality.  As $\C_1 h(\x_0)=h(\x_0)=\Phi(\x_0,\x_0)/2$,
  by \eqref{eq:def.c1}
  \[
  h(\x_0)\geq  \Phi(\x,\x_0)-h(\x) 
  \]
  for all $\x\in E$. Hence
  \begin{eqnarray*}
    h(\x)&\geq&\Phi(\x,\x_0)-h(\x_0)\\
    &=&\Phi(\x,\x_0)-2 h(\x_0) +h(\x_0)=\Phi(\x,\x_0)-\Phi(\x_0,\x_0) +h(\x_0).
  \end{eqnarray*}
  The last inequality follows from the same arguments.
\end{proof}

\section{Self-conjugated Fitzpatrick functions, or fixed points of the
  $\J$ mapping}
\label{sec:Fitz}
Now we will use Theorem~\ref{th:fix} to study self-conjugated
Fitzpatrick's functions. 

In this section  $X$
is a real linear  topological space and $X^*$  its dual, endowed with
the  weak-$*$ topology.
In $X\times X^*$, consider the canonical product topology.
 Use the
notation $\langle x,x^*\rangle$ for the duality product
\[ \langle x,x^*\rangle=x^*(x), \qquad x\in X,\; x^*\in X^*.\]
A point to set operator $T:X\tos X^*$ is a relation on $X$ to $X^*$:
\[ T\subset X\times X^*\]
and $x^*\in T(x)$ means $(x,x^*)\in T$.
An operator $T:X\tos X^*$ is
 \emph{monotone} if
\[
\langle x-y,x^*-y^*\rangle\geq 0,\qquad \forall (x,x^*),(y,y^*) \in T.
\]
The operator $T$ is \emph{maximal monotone} if it is monotone and
maximal in the family of monotone operators of $X$ into $X^*$ (with
respect to order of inclusion).

Fitzpatrick proved  that associated to any maximal
monotone operator in $X$ there exists a family of lower semicontinuous
convex functions in $X\times X^*$ which characterize the operator:
\begin{theorem}[\mbox{ \cite[Theorem 3.10]{FITZ-Rep88}}]
  \label{th:fitz}
  If $T$ is a maximal monotone operator on a real linear topological
  space $X$, then
  $\varphi_T:X\times X^*\to \BR$
  \begin{equation}
    \label{eq:Fitzfunc}
    \varphi_{T}(x, x^*) = \sup_{(y,
      y^*) \in T} \langle x - y, y^* - x^* \rangle + \langle x, x^*
    \rangle
  \end{equation}  
  is the smallest element of the family $\F_T$,
  \begin{equation}
    \label{eq:def.ft}
    \F_T=\left\{ h\in \BR^{X\times X^*}
      \left|
      \begin{array}{ll}
        h\mbox{ is convex and lower semicontinuous}\\
        \inner{x}{x^*}\leq h(x,x^*),\quad \forall (x,x^*)\in X\times X^*\\
        (x,x^*)\in T 
        \Rightarrow 
        h(x,x^*) = \inner{x}{x^*}
       \end{array}
       \right.
      \right\}
    \end{equation}
    Moreover, for any $h\in \F_T$,
    \[
    (x,x^*)\in T\iff    h(x,x^*)=\inner{x}{x^*}.
    \]
\end{theorem}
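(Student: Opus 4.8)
The plan is to verify directly that $\varphi_T$ satisfies the three defining conditions of $\F_T$, then to establish its minimality by a convexity argument along line segments, and finally to read off the equivalence for a general $h\in\F_T$ by a squeezing estimate. The first condition is essentially free: for each fixed $(y,y^*)\in T$ the map
\[
(x,x^*)\mapsto \inner{x-y}{y^*-x^*}+\inner{x}{x^*}=\inner{x}{y^*}+\inner{y}{x^*}-\inner{y}{y^*}
\]
is affine in $(x,x^*)$ and jointly continuous for the product of the topology of $X$ and the weak-$*$ topology of $X^*$. Hence $\varphi_T$, being a pointwise supremum of continuous affine functions, is automatically convex and lower semicontinuous.

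Next I would exploit monotonicity and maximality in tandem. For $(x,x^*)\in T$, monotonicity gives $\inner{x-y}{y^*-x^*}\le 0$ for every $(y,y^*)\in T$, so the supremum over $T$ is at most $0$, while the admissible choice $(y,y^*)=(x,x^*)$ attains $0$; thus $\varphi_T(x,x^*)=\inner{x}{x^*}$, which is the third condition. For $(x,x^*)\notin T$, maximality produces some $(y,y^*)\in T$ with $\inner{x-y}{x^*-y^*}<0$, i.e. $\inner{x-y}{y^*-x^*}>0$, forcing $\varphi_T(x,x^*)>\inner{x}{x^*}$. Together these give $\inner{x}{x^*}\le\varphi_T(x,x^*)$ everywhere (the second condition) with equality exactly on $T$, so $\varphi_T\in\F_T$ and the equivalence $(x,x^*)\in T\iff\varphi_T(x,x^*)=\inner{x}{x^*}$ holds for $\varphi_T$ itself.

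The substantive step is minimality. Given any $h\in\F_T$, I would fix $(x,x^*)$ and $(y,y^*)\in T$ and move along the segment $(x_t,x_t^*)=(1-t)(y,y^*)+t(x,x^*)$ for $t\in(0,1]$. Convexity of $h$ together with $h(y,y^*)=\inner{y}{y^*}$ bounds $h(x_t,x_t^*)$ above by $(1-t)\inner{y}{y^*}+t\,h(x,x^*)$, while the lower bound gives $h(x_t,x_t^*)\ge\inner{x_t}{x_t^*}$. Expanding the bilinear term $\inner{x_t}{x_t^*}$, cancelling $(1-t)\inner{y}{y^*}$, dividing by $t$, and letting $t\to 0^{+}$ eliminates the $t^2$ contribution and leaves precisely
\[
\inner{x}{y^*}+\inner{y}{x^*}-\inner{y}{y^*}\le h(x,x^*).
\]
Since the left-hand side equals $\inner{x-y}{y^*-x^*}+\inner{x}{x^*}$, taking the supremum over $(y,y^*)\in T$ yields $\varphi_T\le h$, so $\varphi_T$ is the smallest element.

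Finally, for the moreover equivalence with a general $h\in\F_T$, one direction is the defining property of $\F_T$, and for the converse I would squeeze: if $h(x,x^*)=\inner{x}{x^*}$ then $\inner{x}{x^*}\le\varphi_T(x,x^*)\le h(x,x^*)=\inner{x}{x^*}$ forces $\varphi_T(x,x^*)=\inner{x}{x^*}$, whence $(x,x^*)\in T$ by the characterization already proved for $\varphi_T$. I expect the main obstacle to be the minimality estimate, specifically justifying the limit $t\to0^{+}$ and ensuring the convexity and semicontinuity claims are read against the weak-$*$ product topology rather than a norm topology; the rest is essentially bookkeeping with the bilinear pairing.
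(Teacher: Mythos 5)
The paper does not prove this theorem at all; it is quoted verbatim from Fitzpatrick's original article (\cite[Theorem 3.10]{FITZ-Rep88}) and used as a black box, so there is no in-paper argument to compare against. Your proof is correct and is essentially the standard (indeed Fitzpatrick's own) argument: lower semicontinuity and convexity from the representation of $\varphi_T$ as a supremum of functions affine and continuous for the product of the given topology on $X$ and the weak-$*$ topology on $X^*$; equality on $T$ from monotonicity plus the admissible choice $(y,y^*)=(x,x^*)$; strict inequality off $T$ from maximality; minimality via the segment $(x_t,x_t^*)=(1-t)(y,y^*)+t(x,x^*)$, where after cancelling $(1-t)\inner{y}{y^*}$ and dividing by $t$ one gets $(1-t)\bigl(\inner{x}{y^*}+\inner{y}{x^*}-\inner{y}{y^*}\bigr)+t\inner{x}{x^*}\leq h(x,x^*)$ for all $t\in(0,1]$, and the passage $t\to 0^+$ is just a limit of an affine function of the real parameter $t$ on the left of an inequality whose right side is fixed --- no topological issue arises there, contrary to the concern you flag at the end. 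The only points worth making explicit are that $h\geq\pi>-\infty$ rules out any $\infty-\infty$ ambiguity in the convexity inequality (and the case $h(x,x^*)=+\infty$ is vacuous), and that maximality forces $T\neq\emptyset$ so the supremum defining $\varphi_T$ is over a non-empty index set.
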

Note that \emph{any} $h\in \F_T$ fully characterizes $T$.
Fitzpatrick's family of convex representation of maximal monotone
operators was recently rediscovered~\cite{BU-SVset02,MarLeg-The}
and since then, this subject has been object of intense research.  
Note that the family $\F_T$ is closed under the $\sup$ operation. Therefore
\begin{proposition}
  \label{pr:bigest}
  Let  $T$ be a maximal
  monotone operator on a real linear topological space $X$.
  There exists a (unique) maximum element $\sigma_T\in\F_T$,
  \[\sigma_T=\sup_{h\in\F_T}\;\{h\}.
  \]
\end{proposition}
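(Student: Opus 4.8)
The plan is to exhibit $\sigma_T$ directly as the pointwise supremum taken over the entire family, and then to verify that this supremum lands back inside $\F_T$; once that is established, maximality and uniqueness are immediate. First I would record that $\F_T$ is non-empty, since Theorem~\ref{th:fitz} guarantees $\varphi_T\in\F_T$. Hence the pointwise supremum
\[
\sigma_T(x,x^*):=\sup_{h\in\F_T}\, h(x,x^*),\qquad (x,x^*)\in X\times X^*,
\]
is a well-defined element of $\BR^{X\times X^*}$, which is exactly the candidate named in the statement.

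The core of the argument is to check that $\sigma_T$ satisfies the three defining conditions of $\F_T$ listed in \eqref{eq:def.ft}. For convexity and lower semicontinuity I would invoke the standard fact that a pointwise supremum of convex (respectively lower semicontinuous) functions is again convex (respectively lower semicontinuous); since every $h\in\F_T$ enjoys both properties, so does $\sigma_T$. For the minorization $\inner{x}{x^*}\le\sigma_T(x,x^*)$, I would use the non-emptiness of the family together with $\inner{x}{x^*}\le h(x,x^*)$ for a fixed $h\in\F_T$, and pass to the supremum. Finally, on the graph of $T$ every member of the family takes the value $\inner{x}{x^*}$ exactly, so the supremum over the single constraint $(x,x^*)\in T$ is again $\inner{x}{x^*}$; in particular $\sigma_T$ is finite and equal to the duality product there. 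This establishes $\sigma_T\in\F_T$, which is precisely the closure of $\F_T$ under suprema noted before the statement.

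With membership in hand, maximality is automatic: by construction $h\le\sigma_T$ pointwise for every $h\in\F_T$, and $\sigma_T$ itself lies in $\F_T$, so it is the largest element; uniqueness follows since any two maximum elements of a partially ordered set dominate each other and hence coincide. There is no genuine obstacle here — the only point requiring care is that the verification be carried out over the (possibly uncountable) collection of \emph{all} members of the family simultaneously, and that the extended-real-valued supremum remains compatible with the exact-value condition on $T$; both are handled by the pointwise checks above.
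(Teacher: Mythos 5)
Your proposal is correct and follows exactly the route the paper intends: the paper justifies Proposition~\ref{pr:bigest} with the one-line remark that $\F_T$ is closed under the $\sup$ operation, and your argument simply fills in that verification (pointwise supremum preserves convexity, lower semicontinuity, the minorization by the duality product, and the exact-value condition on $T$, with non-emptiness supplied by $\varphi_T$). Nothing further is needed.
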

\noindent
The maximal representation $\sigma_T$ and the structure of its
epigraph were studied on a Banach space setting
in~\cite{BU-SVset02,Svaiter-FE}.

Fenchel-Legendre conjugate of $f:X\to \BR$ is defined as $f^*:X^*\to\BR$,
\begin{equation}
  \label{eq:def.conj.classic}
  f^*(x^*)=\sup_{x\in X} \inner{x}{x^*}-f(x). 
\end{equation}
Define, as in \cite{BU-SVset02}
\begin{equation}
  \label{eq:def.j}
   \J: \BR^{X\times X^*}\to \BR^{X\times X^*},\quad (\J h)(x,x^*)=h^*(x^*,x).
\end{equation}
Hence, for all $(x,x^*)\in X\times X^*$, 
\begin{equation}
  \label{eq:j.eq}
  \begin{array}{rcl}
 \J h\,\,(x,x^*)&=&\sup_{(y,y^*)\in X\times X^*}
 \big\langle(y,y^*)\,,\;(x^*,x)\big\rangle-h(y,y^*)\\
 &=&\sup_{(y,y^*)\in X\times X^*}
 \langle x, y^* \rangle + \langle y, x^* \rangle-h(y,y^*).    
  \end{array}
\end{equation}
Direct use of \eqref{eq:def.j} or \eqref{eq:j.eq} and
\eqref{eq:def.conj.classic} shows that
\begin{equation}
  \label{eq:j2}
  \J^2 f=f^{**}\leq  f,\qquad \forall f\in \BR^{X\times X^*}.
\end{equation}

The family $\F_T$ is invariant under $\J$ in a Banach space
setting~\cite{BU-SVset02}.  Here we extend this result to linear
topological spaces.
Note that 
  if $X$ is not Hausdorff, any lower semicontinuous function must
  assume only one value at each family of non-separable points. So,
  in dealing with lower semicontinuous functions,
  whenever we need $X$ to be Hausdorff, we
  can work in $X/(X^*)^\dag$, where $(X^*)^\dag$ is the annihilator of $X^*$. 
\begin{theorem} \label{teo:invari}
 Let  $T$ be a maximal
 monotone operator on a real linear topological space $X$.
 The application $\J$ maps $\F_T$ into itself.
 If $X$ is 
 locally convex, $\J$ maps  $\F_T$ \emph{onto} itself. 
\end{theorem}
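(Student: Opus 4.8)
The plan is to prove the theorem in two halves, matching its two assertions. For the first assertion, that $\J$ maps $\F_T$ into itself, I would take an arbitrary $h\in\F_T$ and verify that $\J h$ satisfies the three defining conditions of $\F_T$ in \eqref{eq:def.ft}. Convexity and lower semicontinuity of $\J h$ are automatic, since by \eqref{eq:def.j} $\J h$ is (up to the swap $(x,x^*)\mapsto(x^*,x)$) a Fenchel-Legendre conjugate, and conjugates are always convex and weak-$*$ lower semicontinuous. For the minorization $\inner{x}{x^*}\le \J h(x,x^*)$, I would exploit the coupling structure: observe that the map $\J$ is exactly the generalized conjugation $\C$ on $E=X\times X^*$ associated with the symmetric coupling $\Phi\big((x,x^*),(y,y^*)\big)=\langle x,y^*\rangle+\langle y,x^*\rangle$, so $\Phi(z,z)=2\inner{x}{x^*}$ and hence $\Phi(z,z)/2=\inner{x}{x^*}$. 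Then the first inequality of Lemma~\ref{lm:tech1} (equivalently Corollary~\ref{cr:lm.tech1}, item~1) gives the bound $\inner{x}{x^*}\le \J h(x,x^*)$ provided I first know $\J h\le h$, i.e. $h\in\HT$; and $\J h=\C h\le h$ holds precisely because $h\ge\inner{\cdot}{\cdot}=\Phi(\cdot,\cdot)/2$ forces membership in $\HT$ via the characterization in the earlier sections.

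For the third defining condition, I must show that $\J h(x,x^*)=\inner{x}{x^*}$ for every $(x,x^*)\in T$. The cleanest route is to use the variational identity from Corollary~\ref{cr:lm.tech1}, item~2, specialized to the Fitzpatrick coupling: since $(x,x^*)\in T$ gives $h(x,x^*)=\inner{x}{x^*}=\Phi(z,z)/2$, the corollary immediately yields $\C_1 h(z)=\C_2 h(z)=\Phi(z,z)/2$, which is exactly $\J h(x,x^*)=\inner{x}{x^*}$. Conversely, to see that $\J h$ does not create spurious points of equality, I would invoke the ``moreover'' clause of Theorem~\ref{th:fitz}: any element of $\F_T$ equals $\inner{\cdot}{\cdot}$ exactly on $T$, so once $\J h\in\F_T$ is established its contact set is automatically $T$. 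Thus the three conditions close up and $\J h\in\F_T$.

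For the second assertion, surjectivity when $X$ is locally convex, the strategy is to show that $\J$ restricted to $\F_T$ is an involution, i.e. $\J^2 h=h$ for $h\in\F_T$, which makes $\J$ its own inverse and hence a bijection of $\F_T$ onto itself. By \eqref{eq:j2} we always have $\J^2 h=h^{**}\le h$, so only the reverse inequality $h\le h^{**}$ is needed; this is the Fenchel-Moreau biconjugation theorem, and it is exactly here that local convexity enters, because $h^{**}=h$ holds for proper convex lower semicontinuous $h$ precisely when the ambient space carries enough continuous linear functionals to separate points, which local convexity on $X\times X^*$ (with the weak-$*$ topology on $X^*$) supplies. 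I expect this biconjugation step to be the main obstacle: I must confirm that the relevant separation/Hahn-Banach machinery is available for the product space $X\times X^*$ under the stated topology, and handle the improper cases ($h\equiv+\infty$ is excluded since $\F_T$ members are minorized by $\inner{\cdot}{\cdot}$ and agree with it on $T$, so each $h\in\F_T$ is proper). Once $h=h^{**}$ is secured for all $h\in\F_T$, surjectivity follows formally: given any $f\in\F_T$, the first part gives $\J f\in\F_T$, and $\J(\J f)=f$ exhibits $f$ as the image of $\J f$, completing the proof.
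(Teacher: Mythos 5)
Your second half (surjectivity via $\J^2 h=h^{**}=h$ from Fenchel--Moreau under local convexity, after checking that members of $\F_T$ are proper) is exactly the paper's argument and is fine. The problem is in the first half. Your derivation of both the minorization $\inner{x}{x^*}\le\J h(x,x^*)$ and the equality on $T$ rests on the claim that every $h\in\F_T$ lies in $\HT$, i.e.\ that $h\ge\pi=\Phi(\cdot,\cdot)/2$ on the diagonal forces $\J h\le h$. No such characterization exists in the paper: Corollary~\ref{cr:lm.tech1}, item 1, is a one-way implication ($h\in\HT\Rightarrow\Phi(r,r)/2\le h(r)$), and its converse is false. Indeed $\F_T\not\subset\HT$ in general: once the theorem is proved, $\J\varphi_T\in\F_T$ and $\varphi_T$ is the minimum of $\F_T$, so $\varphi_T\le\J\varphi_T$, with strict inequality somewhere unless $\varphi_T$ happens to be a fixed point. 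Concretely, for $T=\mathrm{Id}$ on $X=\R$ one computes $\varphi_T(x,x^*)=(x+x^*)^2/4$ and $\J\varphi_T=\delta_T+\pi$, so $\J\varphi_T\not\le\varphi_T$. (This is also why Theorem~\ref{th:main.1} must carry $\J g\le g$ as a genuine extra hypothesis on $g\in\F_T$.) Consequently Lemma~\ref{lm:tech1} and Corollary~\ref{cr:lm.tech1} cannot be applied to an arbitrary $h\in\F_T$, and both the lower bound $\pi\le\J h$ and the equality $\J h=\pi$ on $T$ are left unproved in your outline.

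The paper obtains these two facts by an order-theoretic sandwich instead: from Theorem~\ref{th:fitz}, $\varphi_T\le h\le\delta_T+\pi$; applying the order-reversing $\J$ and using the direct computation $\J(\delta_T+\pi)=\varphi_T$, together with $\J^2(\delta_T+\pi)\le\delta_T+\pi$ and $\pi\le\varphi_T$, gives
\[
\pi\;\le\;\varphi_T\;\le\;\J h\;\le\;\delta_T+\pi,
\]
which is precisely the minorization everywhere plus equality on $T$. If you replace your $\HT$-based step by this sandwich, the rest of your outline (convexity and lower semicontinuity of $\J h$ as a supremum of continuous affine functionals, and the involution argument for surjectivity) goes through.
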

\begin{proof}
Define $\pi:X\times X^*\to \R$, 
\[
\pi(x,x^*)=\inner{x}{x^*}.
\]
Take $h\in \F_T$. 
By Theorem~\ref{th:fitz} 
\[
\varphi_T\leq  h \leq \delta_{T}+\pi.
\]
As $\J$ is order
reversing, applying this mapping on both terms of this inequalities
we obtain
\[\J (\delta_T+\pi)\leq \J h\leq \J \varphi_T.\]
Direct use of \eqref{eq:j.eq} and \eqref{eq:Fitzfunc} yields $\J
(\delta_T+\pi)=\varphi_T$, which applied to the above inequality
yields
\[ \varphi_T\leq \J h \leq \J^2(\delta_T+\pi).\]
Again by Theorem~\ref{th:fitz} $\pi\leq \varphi_T$. Combining this
result with the above inequalities and \eqref{eq:j2} we obtain
\[\pi\leq \J h\leq \delta_T+\pi.\]
According to the above equation, $\inner{x}{x^*}\leq \J h (x,x^*)$ for
all $(x,x^*)\in X\times X^*$, with equality if $(x,x^*) \in T$.
By definition \eqref{eq:def.j} or \eqref{eq:j.eq}, $\J h $ is convex
and lower semicontinuous. Therefore, $\J h\in \F_T$.

Assume now that  $X$ is locally convex. 
Take $h\in\F_T$. As $h$ is
convex and lower semicontinuous,
\[ \J (\J h)=h^{**}=h.\]
As $\J h\in \F_T$, we
obtain  $h=\J^2 h\in \J (\F_T)$.
\end{proof}

Now we are ready to extend 
the fixed point of
theorem~\cite{SV-Fix03} to linear topological spaces.
\begin{theorem}
  \label{th:main.1}
  Let $T$ be a maximal monotone operator on a real linear  topological
  space $X$.
  \begin{enumerate}
  \item If $g\in\F_T$ and $ \J g\leq g$
    then there exists $h\in\F_T$ such that
    \[
    \J g\leq \J h=h\leq g.
    \]
  \item  If $g\in \J(\F_T)$ and $ g\leq \J g$
    then there exists $h\in\F_T$ such that
    \[
    g\leq h=\J h\leq \J g.
    \]
   \end{enumerate}
  In particular, there exists $h\in \F_T$ such that $h=\J h$.
\end{theorem}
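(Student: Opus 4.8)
The plan is to exhibit $\J$ as the symmetric generalized conjugation $\C$ associated with a concrete coupling function on $E = X \times X^*$, and then read off the conclusion from Theorem~\ref{th:fix}. Comparing the expression for $\J$ in \eqref{eq:j.eq} with the definition of $\C$ in \eqref{eq:def.c1}, one sees that $\J = \C$ for the coupling function
\begin{equation*}
\Phi\big((x,x^*),(y,y^*)\big) = \inner{x}{y^*} + \inner{y}{x^*},
\end{equation*}
which is real-valued and manifestly symmetric. Thus every result of Section~\ref{sec:proof} applies verbatim with $\C$ replaced by $\J$ and $E = X \times X^*$.

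Granting this identification, the two items follow by feeding the right $g$ into Theorem~\ref{th:fix}. For item 1, the hypothesis $\J g \leq g$ is exactly $\C g \leq g$, so Theorem~\ref{th:fix}(1) produces $h \in \BR^{X\times X^*}$ with $\J g \leq \J h = h \leq g$. For item 2, I would first write $g = \J g_0$ with $g_0 \in \F_T$; since $g \in \J(\F_T) \subseteq \J(\BR^{X\times X^*}) = \C(\BR^{X\times X^*})$ and $g \leq \J g = \C g$, Theorem~\ref{th:fix}(2) produces $h$ with $g \leq \J h = h \leq \J g$. In both cases the abstract theorem only places $h$ in $\BR^{X\times X^*}$, so the real work is to promote $h$ to a member of $\F_T$.

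The hard part, and the only place where the Fitzpatrick structure enters, is this membership claim, which I would settle using the invariance Theorem~\ref{teo:invari}. In item 1 both $g$ and $\J g$ lie in $\F_T$ (the latter by Theorem~\ref{teo:invari}); in item 2 both $g = \J g_0$ and $\J g$ lie in $\F_T$ for the same reason. Hence in either case $h$ is sandwiched between two elements of $\F_T$. Writing $\pi(x,x^*) := \inner{x}{x^*}$, recall from the proof of Theorem~\ref{teo:invari} that every $f \in \F_T$ obeys $\pi \leq f \leq \delta_T + \pi$; the sandwiching therefore forces
\begin{equation*}
\pi \leq h \leq \delta_T + \pi,
\end{equation*}
so that $\inner{x}{x^*} \leq h(x,x^*)$ for all $(x,x^*)$ and, since $\delta_T = 0$ on $T$, $h = \pi$ on $T$. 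Moreover, as already noted in the proof of Theorem~\ref{teo:invari}, $h = \J h$ is convex and lower semicontinuous directly from \eqref{eq:j.eq}. These are precisely the defining conditions \eqref{eq:def.ft}, so $h \in \F_T$.

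Finally, to obtain the unconditional fixed point I would apply item 1 to the maximal representation $g = \sigma_T$ of Proposition~\ref{pr:bigest}: Theorem~\ref{teo:invari} gives $\J \sigma_T \in \F_T$, and maximality yields $\J \sigma_T \leq \sigma_T$, so item 1 delivers $h \in \F_T$ with $h = \J h$. I expect no further obstacle once the coupling function is identified; the entire argument reduces to the symmetric fixed-point theorem together with the a priori bounds $\pi \leq h \leq \delta_T + \pi$ supplied by the invariance of $\F_T$.
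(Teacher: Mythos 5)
Your proposal is correct and follows essentially the same route as the paper: identify $\J$ with the symmetric conjugation $\C$ for the coupling $\Phi\big((x,x^*),(y,y^*)\big)=\inner{x}{y^*}+\inner{y}{x^*}$, invoke Theorem~\ref{th:fix}, and then verify $h\in\F_T$ by sandwiching $h$ between two members of $\F_T$ (the paper does this pointwise via $\inner{x}{x^*}\leq \J g\leq h\leq g$ rather than through the bounds $\pi\leq h\leq\delta_T+\pi$, but the content is identical), finishing with $\sigma_T$ for the unconditional fixed point.
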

\begin{proof}
Take $E:= X \times X^{*}$ and consider the coupling function 
$\Phi$,
\[
\Phi: (X\times X^*)\times (X\times X^*)\to\R, \quad
\Phi\big(\;(x, x^*)\,, \;(y, y^*)\;\big):=
   \langle x, y^* \rangle + \langle y, x^* \rangle\;. 
\]
Note that $\Phi$ is symmetric. Moreover, using
\eqref{eq:def.conj.classic}, 
\eqref{eq:def.j} and \eqref{eq:def.c1} we have
\[ \C=\J.
\]

If $g\in \F_T$ and $\J g\leq g$, this means $\C g\leq g$. Using item 1
of Theorem~\ref{th:fix} we conclude that there exists $h\in \HT$ such
that
\[ \J g\leq \J h=h\leq g.\] Now we must show that $h\in \F_T$.
As $\J h=h$ is the supermom of a family of continuous affine
functionals on $X\times X^*$, we conclude that $h$ is convex and lower
semicontinuous.
Since $\J g\in \F_T$,
for any $(x,x^*) \in X\times X^*$,
\[ \inner{x}{x^*}\leq \J g(x, x^*)\leq h(x,x^*)\leq g(x,x^*).\]
In particular, $\inner{x}{x^*}\leq h(x,x^*)$. If
$(x,x^*)\in T$ then, as $g\in \F_T$, $g(x,x^*)=\inner{x}{x^*}$,
the above inequalities hold as equalities and
$h(x,x^*)=\inner{x}{x^*}$.
Therefore, $h\in \F_T$ and item 1 holds.

To prove item 2 use item 2 of   Theorem~\ref{th:fix}  and repeat the
reasoning used to prove item 1.

To end the proof, we must show that there exists a fixed point of $\J
$ in $\F_T$. As $\sigma_T$ is maximal in $\F_T$ and $\J
\sigma_T\in\F_T$, we conclude that $\J \sigma_T\leq \sigma_T$. Now,
apply item 1 of the theorem.
\end{proof}

\begin{corollary}
  Let $T$ be a maximal monotone operator on a real linear locally convex topological
  space $X$.  If $g \in \F_T$ and $g\leq \J g$
  then there exists $h\in\F_T$ such that
  \[
   g\leq \J h=h\leq \J g.
  \]
\end{corollary}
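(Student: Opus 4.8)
The plan is to recognize that this corollary is nothing more than a specialization of item~2 of Theorem~\ref{th:main.1}, once the surjectivity half of Theorem~\ref{teo:invari} is invoked. Indeed, item~2 of Theorem~\ref{th:main.1} already delivers exactly the sought conclusion, namely the existence of $h\in\F_T$ with $g\leq h=\J h\leq \J g$, which is precisely the chain $g\leq \J h=h\leq \J g$ claimed here. The only discrepancy is in the hypothesis: item~2 assumes $g\in\J(\F_T)$, whereas the corollary assumes merely $g\in\F_T$. So the entire task reduces to reconciling these two hypotheses in the locally convex setting.

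First I would recall that Theorem~\ref{teo:invari} states that when $X$ is locally convex, the map $\J$ sends $\F_T$ \emph{onto} itself, i.e.\ $\J(\F_T)=\F_T$. This is the genuinely new content contributed by local convexity, and it rests on the identity $\J^2 h=h^{**}=h$, valid for every convex lower semicontinuous $h$, together with the already-established fact that $\J$ maps $\F_T$ into itself. With $\J(\F_T)=\F_T$ in hand, the assumption $g\in\F_T$ is \emph{equivalent} to $g\in\J(\F_T)$, so the hypotheses of item~2 of Theorem~\ref{th:main.1} are automatically satisfied: we have both $g\in\J(\F_T)$ and $g\leq\J g$. Applying that item then yields the desired $h\in\F_T$ with $g\leq h=\J h\leq\J g$, completing the argument.

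I expect there to be essentially no substantive obstacle here; the proof is a one-step deduction. The only point requiring care is to invoke the \emph{onto} (surjectivity) portion of Theorem~\ref{teo:invari} rather than merely its \emph{into} portion, since it is precisely surjectivity that collapses the apparently stronger hypothesis $g\in\J(\F_T)$ into the weaker-looking $g\in\F_T$. All the real work—the variational construction of the fixed point and the invariance of $\F_T$ under $\J$—has already been carried out in Theorem~\ref{th:fix}, Theorem~\ref{th:main.1}, and Theorem~\ref{teo:invari}; this corollary simply records that, under local convexity, the hypothesis of item~2 of Theorem~\ref{th:main.1} can be stated in its most natural form.
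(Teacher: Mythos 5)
Your proposal is correct and matches the paper's own proof, which likewise invokes the surjectivity part of Theorem~\ref{teo:invari} to conclude $g\in\J(\F_T)$ and then applies item~2 of the fixed-point theorem. (The paper's proof cites item~2 of Theorem~\ref{th:fix} where item~2 of Theorem~\ref{th:main.1} is evidently intended; your reading resolves that the same way.)
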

\begin{proof}
  First use Theorem~\ref{teo:invari} to conclude that $g\in \J(\F_T)$
  and then apply item  2 of Theorem~\ref{th:fix}.
\end{proof}
\vspace{0.7 cm}



\def\cprime{$'$}

\end{document}